\newcommand{\norm}[2][]{\ensuremath{\left\Vert #2 \right\Vert_{#1}}}
\newcommand{\spec}[1]{\ensuremath{\mathrm{sp}\inp{#1}}}
\newcommand{\inp}[1]{\left(#1\right)}
\newcommand*{\defeq}{\mathrel{\rlap{%
                     \raisebox{0.3ex}{$\m@th\cdot$}}%
                     \raisebox{-0.3ex}{$\m@th\cdot$}}%
                    =}
\newcommand*{\eqdef}{=
  \mathrel{\rlap{%
      \raisebox{0.3ex}{$\m@th\cdot$}}%
    \raisebox{-0.3ex}{$\m@th\cdot$}}%
}
\newcommand{\R}[0]{\ensuremath{\mathbb{R}}}
\newcommand{\etal}[0]{\emph{et al. }}
\newcommand{\ie}[0]{\emph{i.e.}}
\renewcommand{\vec}[1]{\mathbf{#1}}
\newtheorem{theorem}{Theorem}
\newtheorem{lemma}[theorem]{Lemma}
\newtheorem{proposition}[theorem]{Proposition}
\newtheorem{corollary}[theorem]{Corollary}
\newtheorem{remark}[theorem]{Remark}
\newtheorem{definition}[theorem]{Definition}
\begin{document}
\title{\bf Gradient Descent Only Converges to Minimizers:\\ Non-Isolated Critical Points and Invariant Regions}

%

\author{Ioannis Panageas \\Georgia Institute of Technology\\ ioannis@gatech.edu
\and Georgios Piliouras \\ Singapore University of Technology \& Design  \\georgios@sutd.edu.sg}

\date{}
\maketitle
\thispagestyle{empty}

\begin{abstract}
Given a non-convex twice differentiable cost function $f$,
we prove that the set of initial conditions so that gradient descent converges to saddle points where $\nabla^2 f$ has at least one strictly negative eigenvalue has (Lebesgue) measure zero, even for cost functions $f$ with non-isolated critical points, answering an open question in \cite{DBLP:journals/corr/LeeSJR16}. Moreover, this result extends to forward-invariant convex subspaces, allowing for weak (non-globally Lipschitz) smoothness assumptions. Finally, we produce an upper bound on the allowable step-size.
\end{abstract}

\section{Introduction}

The interplay between the structure of saddle points and the performance of gradient descent dynamics is a critical and not well understood aspect of non-convex optimization.
Despite our incomplete theoretical understanding,
in practice, the intuitive nature of the gradient descent method (and more generally gradient-like algorithms\footnote{A gradient-like system is a system where for each non-equilibrium initial condition the dynamic will move towards a new state whose cost is strictly less than that of the initial state.}) make it a basic tool for attacking non-convex optimization problems for which we have very little understanding of the geometry of their saddle points. In fact, these techniques become particularly useful as the equilibrium structure becomes increasingly complicated, e.g., such as in
the cases of nonnegative matrix factorization \cite{lee2001algorithms} or congestion/potential games \cite{sandholm2009evolutionary}, where symmetries in the nature of non-convex optimization problems give rise to continuums of saddle points with complex geometry.
In these cases, especially, the simplistic, greedy attitude of the gradient descent method, which is by design agnostic towards the global geometry of the cost function minimized, comes rather handy. As we move forward in time, the cost keeps decreasing and convergence is guaranteed.

This simplicity, however, comes at least seemingly at a  significant cost. For example, it is well known that there exist instances where bad initialization of gradient descent converges to saddle points \cite{Nesterov04}. Despite the existence of such worst case instances in theory, practitioners have been rather successful at applying these techniques across a wide variety of problems \cite{ravindran2006engineering}.  Recently, Lee et. al. \cite{DBLP:journals/corr/LeeSJR16} have given a rather insightful interpretation of the effectiveness of gradient descent methods in terms of circumventing the saddle equilibrium problem using tools from topology of dynamical systems. At a glance, the paper argues the following intuitively clear message: The instability of  (locally unstable) saddle points translates to a global phenomenon and the probability of converging to such a saddle point given a randomly chosen (random not over a local neighborhood but over the whole state space) initial condition is zero.

This message is clear, concise, and satisfying in the sense that it transcribes the practical success of the gradient descent method to a concrete theoretical guarantee.  As is usually the case, such high level statements come with an asterisk of necessary technical conditions on the cost function $f$ minimized.

Formally, Lee et. al. define a cost function $f$ as satisfying the ``strict saddle'' property if each critical point\footnote{$x$ is a critical point of $f$ if $\nabla f(x) = 0$.}  $x$ of $f$ is either a local minimizer, or a strict saddle, i.e, $\nabla^2f(x)$ has at least one strictly negative eigenvalue. They argue that if  $f:\R^d \rightarrow \R$ is a twice continuously differentiable function then gradient descent with constant step-size $\alpha$ (defined by $x_{k+1} = x_{k} - \alpha \nabla f(x_k)$) with a random initialization and sufficiently small constant step-size converges to a local minimizer or negative infinity almost surely.

Critically, for this result to apply, $f$ is required to have isolated saddle points, $\nabla f$ is assumed to be globally $L$-Lipschitz\footnote{That is, $f$ satisfies $\norm[2]{\nabla f(\vec{x})- \nabla f(\vec{y})} \leq L \norm[2]{\vec{x}-\vec{y}}$.} and the step-size $\alpha$ is taken to be less than $1/L$. These regularity conditions soften somewhat the impact of the statement
both theoretically as well as in practice. First, although the assumption of isolated fixed points is indeed generic for abstract classes of cost functions, in several special cases of practical interest where the cost function has some degree of symmetry (e.g., due to scaling invariance) this assumption is not satisfied. For this reason, the important question of whether the assumption of isolated equilibria is indeed necessary was explicitly raised in \cite{DBLP:journals/corr/LeeSJR16}. Moreover, the assumption of global Lipschitz continuity for $\nabla f$ is not satisfied even by low degree polynomials (e.g., cubic). Finally, a natural question is how tight is the assumption on the step-size?

In this work we provide answers to all the above questions. We show that the assumption of isolated saddle points is indeed not necessary to argue generic convergence to local minima. To argue this, we need to combine tools from dynamical systems, topology, analysis and optimization theory. Moreover, we show that the globally Lipschitz assumption can be circumvented as long as the domain is convex and forward invariant with respect to gradient descent. This proposition makes our results easily applicable to many standard settings. Finally, using linear algebra and eigenvalue analysis we provide an upper bound on the allowable step-size (for these results to hold).
Our work shows that the high level message of \cite{DBLP:journals/corr/LeeSJR16} is effectively practically always binding. Saddle points are indeed of little concern for the gradient descent method in practice, but it takes quite a bit of theory to argue so.

\medskip
\subsection{Related work}\label{sec:related}

First-order descent methods can indeed escape strict saddle points when assisted by near isotropic noise.
\cite{pemantle1990nonconvergence} establishes convergence of the Robbins-Monro stochastic approximation to local minimizers for strict saddle functions, whereas \cite{kleinberg2009multiplicative} establishes convergence to local minima for perturbed versions of multiplicative weights algorithm in generic potential games. Recently, \cite{ge2015escaping} quantified  the convergence rate of perturbed stochastic gradient descent to local minima. The addition of isotropic noise can significantly slow down the convergence rate. In contrast, our setting is deterministic and corresponds to the simplest possible discrete-time implementation of gradient descent.

Numerous curvature-based optimization techniques have been developed in order to circumvent saddle points (e.g. trust-region methods
\cite{conn2000trust,sun2015complete}, modified Newton's method with curvilinear line search \cite{more1979use}, cubic regularized Newton's method \cite{nesterov2006cubic}, and saddle-free Newton methods \cite{dauphin2014identifying}). Unlike gradient descent, these methods have superlinear per-iteration implementation costs, making them impractical for high dimensional settings.

Gradient descent with carefully chosen initial conditions can bypass the problem of local minima altogether and converge to the global minimum for many practical non-convex optimization settings (e.g.  dictionary learning \cite{arora2015simple}, latent-variable models \cite{zhang2014spectral}, matrix completion \cite{keshavan2009matrix}, and phase retrieval \cite{candes2015phase}). In contrast, we focus on the performance of gradient descent under generic initial conditions.
Finally, some recent work has been focusing on the connections between stability and efficiency of fixed points in non-convex optimization (e.g., Gaussian random fields \cite{choromanska2014loss}).

Gradient-like dynamics, where the dynamic moves towards states of decreased cost but without necessarily moving in the direction of steepest decrease, is a generalization of gradient dynamics that arise in a number of applications including game theory and mathematical biology. Similar arguments about convergence to local minima for almost all initial conditions have been argued \cite{kleinberg2009multiplicative,PP16,ITCSMPP15} for (variants of) replicator dynamics and multiplicative weights update algorithms when applied to games where the incentives of all agents are closely aligned.\footnote{Such games are known as potential/congestion
games \cite{kleinberg2009multiplicative} and correspond to games where all agents act as if they share a common cost/potential function that they are trying to minimize.} From the perspective of biology and specifically evolution, (variants of) replicator/MWUA  \cite{PNAS2:Chastain16062014,meir2015sex} capture standard models of the evolution of the frequencies of different genotypes within a species (preferential survival of the fittest). By analyzing the properties of local minimum energy states  we can derive completely different conclusions about the long term system behavior (in terms e.g., of the resulting genetic diversity) from the ones that follow from analyzing all saddle points \cite{ITCSMPP15}. In fact, understanding the properties of local minima raises interesting computational complexity questions \cite{2014arXiv1411.6322M}. Finally,  examining the stability properties of equilibria can help us capture quantitatively the long term behavior of biologically inspired gradient-like systems even under time-evolving fitness landscapes \cite{2015arXiv151101409M}. Given the emergent overlapping interests between these areas and (non convex) optimization theory, it seems that novel opportunities for cross-fertilization between these research communities arise.

\subsection{Organization} 
In Section \ref{sec:prelims}, we introduce the notation and definitions used throughout the paper and state formally our main theorems. In Section \ref{sec:proofs}, we prove our results establishing the negligible probability of converging to saddle points, addressing the possibility of continuums of equilibria, forward-invariant subspaces,
and establishing an upper bound on the step-size. In Section \ref{sec:examples}, we produce several examples showcasing the effectiveness of our methods.  Finally, we conclude in Section \ref{sec:conclusion} by suggesting directions for future work.

\section{Preliminaries}\label{sec:prelims}

{\bf Notation:} We use boldface letters, e.g., $\vec{x}$, to denote column vectors. We denote by $\spec{A}, \norm[2]{A}$ the spectral radius and spectral norm of a symmetric matrix $A$ respectively. We also use $\norm[2]{\vec{x}}$ for the $\ell_2$ norm of vector $\vec{x}$. By $\nabla ^2 f(\vec{x})$ we denote the Hessian of a twice differentiable function $f : \mathcal{E} \to \mathbb{R}$, for some set $\mathcal{E} \subseteq \mathbb{R}^N$.

Assume a minimization problem of the form $\min_{\vec{x} \in \mathbb{R}^N} f(\vec{x})$ where $f : \mathbb{R}^N \to \mathbb{R}$ is a twice continuously differentiable function. Gradient descent is one of the most well-known algorithms (discrete dynamical system) to attack this generic optimization problem. It is defined by the equations below:
$$
\vec{x}_{k+1} = \vec{x}_k - \alpha \nabla f(\vec{x}_k),
$$

\noindent
or equivalently $\vec{x}_{k+1} = g(\vec{x}_k)$
with $g(\vec{x}) = \vec{x} - \alpha \nabla f(\vec{x}),$ $g: \mathbb{R}^N \to \mathbb{R}^N$ and $\alpha>0$.

It is easy to see that the fixed points of the dynamical system $\vec{x}_{k+1} = g(\vec{x}_k)$ are exactly the points $\vec{x}$ so that $\nabla f(\vec{x})=\vec{0}$, called \emph{critical points or equilibria}.
The set of local minima of $f$ is a subset of the set of critical points of $f$. These two sets do not coincide and this poses a serious obstacle for proving strong theoretical guarantees for gradient descent, since the dynamics may converge to a critical point which is not a local minimum, called a \emph{saddle point}.

Lee \etal \cite{DBLP:journals/corr/LeeSJR16} argue, under technical conditions which include the assumption of isolated critical points, that the set of initial conditions that converge to \emph{strict saddle points}  is a zero measure set (for definition of strict saddle, see Definition \ref{def:main}). The paper leaves as an open question whether the condition of isolated equilibria is necessary. We prove that
 the set of initial conditions that converge to a strict saddle point is a zero measure set
 even in the case of non-isolated critical points\footnote{Our arguments hence allow for cost functions $f$'s with uncountably many critical points.}. Furthermore, one of the conditions for $f$ is that $\nabla f$ is globally Lipschitz, which implies that the second derivative of $f$ is bounded, i.e., there exists a $\beta>0$ such that for all $\vec{x}$ we have $\norm[2]{\nabla^2 f(\vec{x})} \leq \beta$. However, even third degree polynomial functions are not globally Lipschitz. We provide a theorem which can circumvent this assumption as long as the domain $\mathcal{S}$ is \emph{forward or positively invariant} with respect to $g$, i.e., $g(\mathcal{S}) \subseteq \mathcal{S}$. Finally, we provide an easy upper bound on the step-size $\alpha$, via eigenvalue analysis of the Jacobian of $g$, i.e., $I - \alpha\nabla ^2 f(\vec{x})$.

Below we give some necessary definitions as appeared in Lee \etal \cite{DBLP:journals/corr/LeeSJR16}.

\begin{definition}\label{def:main}
\noindent
\begin{itemize}
  \item A point $\vec{x}^*$ is a critical point of $f$ if $\nabla f (\vec{x}^*)= \vec{0}$. We denote by $C = \{\vec{x}: \nabla f(\vec{x})=\vec{0}\}$ the set of critical points (can be uncountably many).
  \item A critical point $\vec{x}^*$ is isolated if there is a neighborhood $U$ around $\vec{x}^*$ and $\vec{x}^*$ is the only critical point
in $U$.\footnote{If the critical points are isolated then they are countably many or finite.} Otherwise is called non-isolated.
  \item A critical point $\vec{x}^*$ of $f$ is a saddle point if for all neighborhoods $U$ around $\vec{x}^*$ there are $\vec{y},\vec{z} \in U$  such that $f(\vec{z}) \leq f(\vec{x}^*) \leq f(\vec{y})$.
  \item A critical point $\vec{x}^*$ of $f$ is a strict saddle if $\lambda_{\min}(\nabla^2 f(\vec{x}^*))<0$ (minimum eigenvalue of matrix $\nabla^2 f(\vec{x}^{*})$ is negative).
  \item A set $\mathcal{S}$ is called forward or positively invariant with respect to some function $h : \mathcal{E} \to \mathbb{R}^N$ with $\mathcal{S} \subseteq \mathcal{E} \subseteq\mathbb{R}^N$ if $h(\mathcal{S}) \subseteq \mathcal{S}$.
\end{itemize}
\end{definition}

\subsection{Main Results}
In \cite{DBLP:journals/corr/LeeSJR16}, the steps of the proof of their result are the following: Under the regularity assumption that $\nabla f$ is globally Lipschitz with some Lipschitz constant $L$, Lee \etal are able to show that $g(\vec{x}) = \vec{x} - \alpha \nabla f(\vec{x})$ is a diffeomorphism for $\alpha<1/L$. Afterwards, using the center-stable manifold theorem (see theorem \ref{thm:manifold}), they show that the set of initial conditions so that $g$ converges to saddle points has measure zero under the assumption that the critical points are isolated. We generalize their result for non-isolated critical points, answering one of their open questions (see also the example in Section \ref{eg:continuums}, where there is a line of critical points).
\begin{theorem}\label{thm:main1} {\bf [Non-isolated]} Let $f: \mathbb{R}^N \to \mathbb{R}$ be a twice continuously differentiable function and $\sup_{\vec{x} \in \mathbb{R}^N} \norm[2]{\nabla^2 f(\vec{x})} \leq L < \infty$. The set of initial conditions $\vec{x} \in \mathbb{R}^N$ so that gradient descent with step-size $0<\alpha< 1/L$ converges to a strict saddle point is of (Lebesgue) measure zero, without the assumption that critical points are isolated.
\end{theorem}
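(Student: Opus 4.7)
The strategy is to adapt the argument of \cite{DBLP:journals/corr/LeeSJR16}, where the isolated–critical–point hypothesis is used only to form a countable union of local stable manifolds, one per saddle. What I need to add is a way to produce such a countable family even when the set $\mathcal{X}^{*}$ of strict saddles is uncountable.

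I would proceed in three steps. \emph{First}, as in \cite{DBLP:journals/corr/LeeSJR16}, the hypothesis $\sup_{\vec{x}}\norm[2]{\nabla^2 f(\vec{x})}\le L$ together with $\alpha<1/L$ ensures that every eigenvalue of $\jac{g}(\vec{x}) = I-\alpha\nabla^2 f(\vec{x})$ lies in $(0,2)$, so $\jac{g}$ is invertible everywhere and $g$ is a global $C^{1}$ diffeomorphism; in particular, each $g^{-k}$ sends Lebesgue null sets to Lebesgue null sets. \emph{Second}, at every strict saddle $p\in\mathcal{X}^{*}$ the negative eigenvalue of $\nabla^2 f(p)$ produces an eigenvalue $>1$ of $\jac{g}(p)$, so the unstable subspace $E^{u}(p)$ is non-trivial. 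Applying the center–stable manifold theorem (Theorem \ref{thm:manifold}) at each such $p$ yields an open neighborhood $B_{p}$ and a $C^{1}$ embedded submanifold $W^{cs}_{\mathrm{loc}}(p)\subseteq B_{p}$ of codimension $\dim E^{u}(p)\ge 1$, with the capture property $\bigcap_{k\ge 0} g^{-k}(B_{p})\subseteq W^{cs}_{\mathrm{loc}}(p)$. As a submanifold of positive codimension in $\mathbb{R}^{N}$, each $W^{cs}_{\mathrm{loc}}(p)$ has Lebesgue measure zero.

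\emph{Third} --- and this is the step that replaces the isolation hypothesis --- the family $\{B_{p}\}_{p\in\mathcal{X}^{*}}$ is an open cover of $\mathcal{X}^{*}$, and since every open subset of $\mathbb{R}^{N}$ is Lindel\"of (being second-countable), I can extract a countable subcover $\{B_{p_{i}}\}_{i\in\mathbb{N}}$ of $\mathcal{X}^{*}$. If the iterates $\vec{x}_{k}=g^{k}(\vec{x}_{0})$ converge to some $p\in\mathcal{X}^{*}$, then $p\in B_{p_{i}}$ for some $i$, and openness of $B_{p_{i}}$ gives a $k_{0}$ with $\vec{x}_{k}\in B_{p_{i}}$ for every $k\ge k_{0}$; the capture property then forces $\vec{x}_{k_{0}}\in W^{cs}_{\mathrm{loc}}(p_{i})$, whence $\vec{x}_{0}\in g^{-k_{0}}(W^{cs}_{\mathrm{loc}}(p_{i}))$. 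The set of such bad initial conditions is therefore contained in the countable union $\bigcup_{i\in\mathbb{N}}\bigcup_{k\ge 0} g^{-k}(W^{cs}_{\mathrm{loc}}(p_{i}))$ of Lebesgue null sets, and hence has Lebesgue measure zero.

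The main obstacle is this third step: one must resist the temptation to look for a uniform neighborhood size around every strict saddle --- which need not exist when $\mathcal{X}^{*}$ accumulates or is unbounded --- and instead recognize that, because the center–stable manifold theorem is purely local and the conclusion only needs to collect countably many null sets, second–countability of $\mathbb{R}^{N}$ is exactly the right substitute for isolation of critical points.
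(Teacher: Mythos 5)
Your proposal is correct and follows essentially the same route as the paper's own proof: establish that $g$ is a diffeomorphism (so $g^{-1}$ preserves Lebesgue null sets), apply the center--stable manifold theorem at each strict saddle to get a codimension-$\ge 1$ local invariant manifold inside a ball, invoke Lindel\"of/second-countability to replace the possibly uncountable family of balls by a countable subcover, and trap every bad initial condition in a countable union of preimages of null sets. The only cosmetic difference is that you assert global injectivity of $g$ directly from invertibility of the Jacobian, whereas the paper derives it from the Lipschitz bound $\norm[2]{\nabla f(\vec{y})-\nabla f(\vec{x})}\le L\norm[2]{\vec{y}-\vec{x}}$ and $\alpha L<1$; this is a one-line fix and does not affect the argument.
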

We can prove a stronger version of the theorem above, circumventing the globally Lipschitz condition for domains which are forward invariant (see also the example in Section \ref{eg:positive}).
\begin{theorem}\label{thm:main2}{\bf [Non-isolated, forward invariant]} Let $f: \mathcal{S} \to \mathbb{R}^N$ be twice continuously differentiable in an open convex set $\mathcal{S} \subseteq \mathbb{R}^N$ and $\sup_{\vec{x} \in \mathcal{S}} \norm[2]{\nabla^2 f(\vec{x})} \leq L <\infty $. If $g(\mathcal{S}) \subseteq \mathcal{S}$ (where $g(\vec{x}) = \vec{x} - \alpha \nabla f(\vec{x})$) then the set of initial conditions $\vec{x} \in \mathcal{S}$ so that gradient descent with step-size $0<\alpha< 1/L$ converges to a strict saddle point is of (Lebesgue) measure zero, without the assumption that critical points are isolated.
\end{theorem}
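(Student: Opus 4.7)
The plan is to follow the blueprint of \cite{DBLP:journals/corr/LeeSJR16}: (i) show that the gradient descent map $g$ is a $C^1$ diffeomorphism from $\mathcal{S}$ onto its image; (ii) invoke the center--stable manifold theorem at each strict saddle to get a local ``bad set'' which is a null $C^1$ submanifold; (iii) patch these local pieces together into a single null set, despite the fact that the set $A^*$ of strict saddles may be uncountable. The two genuinely new ingredients relative to \cite{DBLP:journals/corr/LeeSJR16} are: replacing the global Lipschitz hypothesis on $\nabla f$ by a bounded Hessian on the forward--invariant convex set $\mathcal{S}$ in step (i), and a separability/Lindel\"of argument in step (iii) that removes the isolation assumption on critical points.

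For step (i), the bound $\alpha L < 1$ forces the eigenvalues of $\jac{g}(\vec{x}) = I - \alpha \nabla^2 f(\vec{x})$ to lie in $(1-\alpha L,\,1+\alpha L)\subset(0,2)$, so $\jac{g}(\vec{x})$ is invertible everywhere on $\mathcal{S}$ and $g$ is a local diffeomorphism by the inverse function theorem. For global injectivity, convexity of $\mathcal{S}$ is crucial: for any $\vec{x},\vec{y}\in\mathcal{S}$ the segment $[\vec{x},\vec{y}]$ lies in $\mathcal{S}$, so
\[
\norm[2]{\nabla f(\vec{x})-\nabla f(\vec{y})} \;=\; \norm[2]{\int_0^1 \nabla^2 f\inp{\vec{y}+t(\vec{x}-\vec{y})}(\vec{x}-\vec{y})\,dt} \;\le\; L\,\norm[2]{\vec{x}-\vec{y}}.
\]
If $g(\vec{x})=g(\vec{y})$ then $\vec{x}-\vec{y}=\alpha(\nabla f(\vec{x})-\nabla f(\vec{y}))$, which combined with $\alpha L<1$ forces $\vec{x}=\vec{y}$. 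Hence $g:\mathcal{S}\to g(\mathcal{S})$ is a $C^1$ diffeomorphism onto its (open) image. This is the only place the convexity and forward--invariance hypotheses are used.

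For step (ii), fix a strict saddle $\vec{x}^*$. Since $\lambda_{\min}(\nabla^2 f(\vec{x}^*))<0$, the matrix $\jac{g}(\vec{x}^*)$ has at least one eigenvalue strictly larger than $1$, so $\vec{x}^*$ is a hyperbolic fixed point of $g$ with nontrivial unstable subspace. The center--stable manifold theorem then yields an open neighborhood $U_{\vec{x}^*}\subseteq\mathcal{S}$ and a $C^1$ embedded submanifold $W^{cs}_{\mathrm{loc}}(\vec{x}^*)\subseteq U_{\vec{x}^*}$ of dimension strictly less than $N$ (hence Lebesgue null in $\R^N$) such that any orbit of $g$ that stays in $U_{\vec{x}^*}$ for all forward time is contained in $W^{cs}_{\mathrm{loc}}(\vec{x}^*)$. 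Consequently, the initial condition of any orbit converging to a point of $U_{\vec{x}^*}$ is contained in $\bigcup_{k\ge 0} g^{-k}\inp{W^{cs}_{\mathrm{loc}}(\vec{x}^*)}$, which is null because $g^{-1}$ is $C^1$ on $g(\mathcal{S})$ and a countable union of null sets is null.

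The principal obstacle is step (iii), aggregating over the possibly uncountable $A^*$. Here I would use that $\mathcal{S}\subseteq\R^N$ is second countable, hence Lindel\"of: the open cover $\inb{U_{\vec{x}^*}:\vec{x}^*\in A^*}$ of $A^*$ admits a countable subcover $\inb{U_{\vec{x}^*_i}}_{i\in\mathbb{N}}$. If an orbit converges to some $\vec{x}^*\in A^*$, pick $i$ with $\vec{x}^*\in U_{\vec{x}^*_i}$; by openness of $U_{\vec{x}^*_i}$ and convergence, the tail of the orbit eventually lies (and remains) in $U_{\vec{x}^*_i}$, so by the local characterization above its tail lies in $W^{cs}_{\mathrm{loc}}(\vec{x}^*_i)$. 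Therefore the entire bad set is contained in
\[
\bigcup_{i\in\mathbb{N}}\bigcup_{k\ge 0} g^{-k}\inp{W^{cs}_{\mathrm{loc}}(\vec{x}^*_i)},
\]
a countable union of null sets, which is null. This Lindel\"of step is precisely what replaces the ``isolated critical points'' hypothesis of \cite{DBLP:journals/corr/LeeSJR16}, and I expect the subtlety to lie in verifying that the neighborhood chosen from a different saddle $\vec{x}^*_i$ still correctly captures the orbit's tail — a point that follows cleanly from the ``stays forward $\Rightarrow$ lies in the local manifold'' characterization in the center--stable manifold theorem.
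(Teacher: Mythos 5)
Your proposal is correct and follows essentially the same route as the paper: establish that $g$ is a diffeomorphism via the segment-integral Lipschitz bound (using convexity) and the eigenvalue analysis of $I-\alpha\nabla^2 f$, apply the center--stable manifold theorem at each strict saddle, take a countable Lindel\"of subcover of the neighborhoods, and pull back the null local manifolds through the locally Lipschitz (hence null-preserving) map $g^{-1}$. One cosmetic note: a strict saddle need not be a \emph{hyperbolic} fixed point (eigenvalues of $Dg$ equal to $1$ may occur), but since you invoke the center--stable rather than the stable manifold theorem this does not affect the argument.
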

Finally, via eigenvalue analysis of $I - \alpha\nabla ^2 f(\vec{x})$, we can find upper bounds on the step-size of gradient descent. A straightforward theorem is the following:
\begin{theorem}\label{thm:main3}{\bf [Upper bound on step-size]} Let $f$ be twice continuously differentiable in an open set $\mathcal{S} \subseteq \mathbb{R}^N$ and  $\mathcal{C}^*$ be the set of local minima. Assume also that $\gamma  < \inf_{\vec{x} \in \mathcal{C}^*} \norm[2]{\nabla^2 f(\vec{x})}  < \infty$. A necessary condition so that gradient descent converges to local minima for all but (Lebesgue) measure zero initial conditions in $\mathcal{S}$ is that the step-size satisfies $\alpha < \frac{2}{\gamma}$.
\end{theorem}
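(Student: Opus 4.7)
The plan is to argue the contrapositive: if $\alpha \geq 2/\gamma$, then the set of initial conditions in $\mathcal{S}$ whose gradient-descent orbit converges to some local minimum has Lebesgue measure zero, directly contradicting the hypothesis. The finiteness assumption $\inf_{\vec{x} \in \mathcal{C}^*}\norm[2]{\nabla^2 f(\vec{x})} < \infty$ forces $\mathcal{C}^* \neq \emptyset$, so this contradiction is meaningful.

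The crux is an eigenvalue computation at an arbitrary $\vec{x}^* \in \mathcal{C}^*$. Since $\vec{x}^*$ is a local minimum of $f$, the second-order necessary conditions force $\nabla^2 f(\vec{x}^*) \succeq 0$, so $\norm[2]{\nabla^2 f(\vec{x}^*)}$ equals the largest eigenvalue $\lambda_{\max}(\vec{x}^*)$. The hypothesis then gives $\lambda_{\max}(\vec{x}^*) > \gamma$, hence $\alpha\, \lambda_{\max}(\vec{x}^*) > \alpha \gamma \geq 2$. The Jacobian of $g$ at $\vec{x}^*$ is $I - \alpha \nabla^2 f(\vec{x}^*)$, whose spectrum is $\{1-\alpha\lambda : \lambda \in \spec{\nabla^2 f(\vec{x}^*)}\}$, and therefore contains $1 - \alpha\,\lambda_{\max}(\vec{x}^*) < -1$. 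Thus every $\vec{x}^* \in \mathcal{C}^*$ is an unstable fixed point of the map $g$, with at least one strictly expanding eigendirection.

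From here the argument parallels the proofs of Theorems \ref{thm:main1} and \ref{thm:main2}. The local center-stable manifold theorem applied at each $\vec{x}^* \in \mathcal{C}^*$ produces a neighborhood in which points converging to $\vec{x}^*$ are trapped inside a manifold of dimension at most $N-1$, hence a Lebesgue null set. To globalize across the possibly uncountable set $\mathcal{C}^*$, I reuse the Lindel\"of / second-countability argument from those earlier proofs: cover $\mathcal{C}^*$ by countably many such local stable neighborhoods and observe that countable unions of preimages under iterates of $g$ of null sets remain null.

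I expect this globalization step to be the only delicate part of the argument, since one must combine the second-countability of $\mathcal{S}$ with the fact that the map $g$ need not be globally nice when $\alpha$ is as large as $2/\gamma$; however, because the required machinery has already been set up for non-isolated equilibria in Theorems \ref{thm:main1} and \ref{thm:main2}, it transfers with no essential modification. Combining the pieces: under $\alpha \geq 2/\gamma$ the set of initial conditions in $\mathcal{S}$ converging to $\mathcal{C}^*$ has Lebesgue measure zero, contradicting the convergence-almost-everywhere hypothesis. Therefore $\alpha < 2/\gamma$, as claimed.
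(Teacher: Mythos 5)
Your eigenvalue computation at an arbitrary local minimum is correct and is essentially the heart of the paper's own proof; if anything it is slightly sharper, since by invoking the second-order necessary condition $\nabla^2 f(\vec{x}^*)\succeq 0$ you exhibit an actual eigenvalue $1-\alpha\lambda_{\max}(\vec{x}^*)<-1$ of the Jacobian $I-\alpha\nabla^2 f(\vec{x}^*)$, whereas the paper only lower-bounds the spectral radius via $\spec{I - \alpha\nabla^2 f(\vec{x}^*)} \geq \spec{\alpha\nabla^2 f(\vec{x}^*)}-1>\alpha\gamma-1\geq 1$. The place where you depart from the paper, and where a genuine gap appears, is the globalization. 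The paper stops after the linearization step: it observes that every $\vec{x}^*\in\mathcal{C}^*$ is a (Lyapunov) unstable fixed point of $g$ and concludes directly that convergence to local minima for almost every initial condition is impossible. You instead propose to rerun the center-stable-manifold / Lindel\"of / iterated-preimage machinery of Theorems \ref{thm:main1} and \ref{thm:main2}, asserting that it ``transfers with no essential modification.'' It does not. That machinery hinges on $g$ being a diffeomorphism: injectivity and the locally Lipschitz inverse $g^{-1}$ (fed into Lemma \ref{lips}) are precisely what make the sets $D_{i}(\vec{r}_m)$ null, and both were established in Lemma \ref{lem:diffeo} only under $\alpha L<1$. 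In the regime $\alpha\geq 2/\gamma$ there is no such bound: $g$ need not be injective, $\det\left(I-\alpha\nabla^2 f(\vec{x})\right)$ may vanish on a set of positive measure, and the preimage of a null set under a non-injective $C^1$ map need not be null. Even the local application of Theorem \ref{thm:manifold} can fail, because at a minimum where some Hessian eigenvalue satisfies $\alpha\lambda=1$ the Jacobian of $g$ is singular and $g$ is not a local diffeomorphism there.

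Fortunately, none of that extra machinery is needed for the statement as given. Once every point of $\mathcal{C}^*$ is known to be an unstable fixed point of $g$ (spectral radius of the linearization strictly greater than $1$), the contradiction with almost-everywhere convergence to local minima already follows, which is exactly where the paper ends its proof. I would cut the second half of your argument and conclude there; if you do want the stronger claim that the basin of $\mathcal{C}^*$ is Lebesgue-null for large $\alpha$, you would need a genuinely new argument to handle the non-invertibility of $g$.
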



\section{Proving the theorems}\label{sec:proofs}

Before we proceed with the proofs, let us argue that Theorem \ref{thm:main2} is a generalization of Theorem \ref{thm:main1}. This can be checked  by setting $\mathcal{S}  \defeq \mathbb{R}^N$ and observing that $g(\mathbb{R}^N) \subseteq \mathbb{R}^N$. We continue with the proofs of Theorems \ref{thm:main2} and \ref{thm:main3}.

\subsection{Proof of Theorem \ref{thm:main2}}

In this section, we prove Theorem \ref{thm:main2}. We start by showing that the assumptions of Theorem \ref{thm:main2} imply that $\nabla f(\vec{x})$ is Lipschitz in $\mathcal{S}$.
\begin{lemma} \label{lem:lipsc}Let $f : \mathcal{S} \to \mathbb{R}^N$ where $\mathcal{S}$ is an open convex set and $f$ be twice continuously differentiable in $\mathcal{S}$. Also assume that $\sup_{\vec{x}\in \mathcal{S}}\norm[2]{\nabla^2 f(\vec{x})} \leq L <\infty$. Then $\nabla f$ satisfies the Lipschitz condition in $\mathcal{S}$ with Lipschitz constant $L$.
\end{lemma}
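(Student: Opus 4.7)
The plan is to prove the lemma by the standard route: reduce the difference $\nabla f(\vec{y}) - \nabla f(\vec{x})$ to an integral of the Hessian along the segment joining $\vec{x}$ and $\vec{y}$, and then bound the integrand via the hypothesis on $\norm[2]{\nabla^2 f}$.

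First I would fix arbitrary $\vec{x}, \vec{y} \in \mathcal{S}$ and invoke convexity of $\mathcal{S}$ to guarantee that the entire segment $\{\vec{x} + t(\vec{y}-\vec{x}) : t \in [0,1]\}$ lies inside $\mathcal{S}$; this is exactly where the convexity assumption is needed, since otherwise the Hessian might not even be defined at intermediate points. Next I would introduce the auxiliary map $\phi : [0,1] \to \mathbb{R}^N$ defined by $\phi(t) \defeq \nabla f(\vec{x} + t(\vec{y}-\vec{x}))$, observe that $\phi$ is continuously differentiable on $[0,1]$ by the $C^2$ hypothesis on $f$, and compute its derivative via the chain rule as $\phi'(t) = \nabla^2 f(\vec{x} + t(\vec{y}-\vec{x}))(\vec{y}-\vec{x})$.

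Then I would apply the (component-wise) fundamental theorem of calculus to write
\[
\nabla f(\vec{y}) - \nabla f(\vec{x}) \;=\; \phi(1) - \phi(0) \;=\; \int_0^1 \nabla^2 f(\vec{x} + t(\vec{y}-\vec{x}))(\vec{y}-\vec{x})\, dt.
\]
Taking $\ell_2$ norms, using the standard inequality $\norm[2]{\int_0^1 \vec{v}(t)\, dt} \le \int_0^1 \norm[2]{\vec{v}(t)}\, dt$, the submultiplicative bound $\norm[2]{A\vec{u}} \le \norm[2]{A}\cdot \norm[2]{\vec{u}}$ for the spectral norm, and finally the hypothesis $\norm[2]{\nabla^2 f(\vec{z})} \le L$ uniformly on $\mathcal{S}$, I obtain
\[
\norm[2]{\nabla f(\vec{y}) - \nabla f(\vec{x})} \;\le\; \int_0^1 L\, \norm[2]{\vec{y}-\vec{x}}\, dt \;=\; L \norm[2]{\vec{y}-\vec{x}},
\]
which is exactly the desired Lipschitz condition.

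There is no serious obstacle here; the only subtlety worth spelling out in the writeup is the appeal to convexity to keep the whole segment inside $\mathcal{S}$ (so that the Hessian bound applies along the path of integration), and the fact that the vector-valued fundamental theorem of calculus is legitimate because $\phi$ is $C^1$ on a compact interval. Everything else is bookkeeping with norms.
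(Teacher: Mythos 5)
Your proposal is correct and follows essentially the same route as the paper: both define the auxiliary map $t \mapsto \nabla f(\vec{x}+t(\vec{y}-\vec{x}))$, differentiate via the chain rule, integrate over $[0,1]$, and bound using the spectral-norm hypothesis. Your explicit remark about convexity keeping the segment inside $\mathcal{S}$ is a useful clarification that the paper leaves implicit, but the argument is the same.
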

\begin{proof} Let $\vec{x},\vec{y} \in \mathcal{S}$ (column vectors) and define the function $H:[0,1] \to \mathbb{R}^N $ as $H(t) = \nabla f(\vec{x}+t(\vec{y}-\vec{x}))$. By the chain rule we get that $H'(t) \defeq \frac{d H}{dt} = (\nabla ^2 f( \vec{x} + t(\vec{y}-\vec{x})))\cdot(\vec{y}-\vec{x})$.
It holds that
\begin{align*}
\norm[2]{\nabla f(\vec{y}) - \nabla f(\vec{x})} = \norm[2]{\int_{0}^1 H'(t) dt} &\leq \int_0^1 \norm[2]{H'(t)}dt
\\& = \int_{0}^1 \norm[2]{(\nabla ^2 f( \vec{x} + t(\vec{y}-\vec{x})))(\vec{y}-\vec{x})} dt
\\& \leq \int_{0}^1 \norm[2]{\nabla ^2 f( \vec{x} + t(\vec{y}-\vec{x}))}\norm[2]{\vec{y}-\vec{x}} dt
\\& \leq \int_{0}^1 L \norm[2]{\vec{y}-\vec{x}} dt = L \norm[2]{\vec{y}-\vec{x}}.
\end{align*}
\end{proof}
\begin{remark}\label{rem:spec} From Schwarz's theorem we get that $\nabla^2 f(\vec{x})$ is symmetric for $\vec{x} \in \mathcal{S}$, hence $\norm[2]{\nabla^2 f(\vec{x})} = \spec{\nabla^2 f(\vec{x})}$.
\end{remark}
The assumption that $\sup_{\vec{x} \in \mathcal{S}}\norm[2]{\nabla^2 f(\vec{x})} \leq L <\infty$ implies that $\nabla f(x)$ is Lipschitz with constant $L$ in the convex set $\mathcal{S}$, as stated by Lemma \ref{lem:lipsc}. We show that the converse holds as well, i.e., the Lipschitz condition for $\nabla f(\vec{x})$ with constant $L$ in the main theorem in Lee \etal implies $\norm[2]{\nabla^2 f(\vec{x})} \leq L$ for all $\vec{x} \in \mathcal{S}$ and hence the assumption in our Theorems \ref{thm:main1}, \ref{thm:main2} that $\sup_{\vec{x} \in \mathcal{S}} \norm[2]{\nabla^2 f(\vec{x})} \leq L$ is satisfied.
\begin{lemma}\label{lem:invlipsc} Let $f : \mathcal{S} \to \mathbb{R}^N$ where $\mathcal{S}$ is an open convex set and $f$ is twice continuously differentiable in $\mathcal{S}$. Assume $\nabla f(\vec{x})$ is Lipschitz with constant $L$ in $\mathcal{S}$ then it holds $\sup_{\vec{x} \in \mathcal{S}}\norm[2]{\nabla^2 f(\vec{x})} \leq L$.
\end{lemma}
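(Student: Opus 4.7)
The plan is to show pointwise that the operator norm of the Hessian at each $\vec{x}\in\mathcal{S}$ is bounded by $L$, by extracting $\nabla^{2} f(\vec{x})$ as a directional derivative of $\nabla f$ and exploiting the Lipschitz bound on the difference quotient. Since $\mathcal{S}$ is open, for every $\vec{x}\in\mathcal{S}$ and every unit vector $\vec{v}\in\mathbb{R}^{N}$ there exists $t_{0}>0$ such that $\vec{x}+t\vec{v}\in\mathcal{S}$ for all $|t|<t_{0}$; this is what lets me differentiate along $\vec{v}$ while staying in the domain where the Lipschitz bound applies.

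First I would recall that for fixed $\vec{x}$ and unit $\vec{v}$, twice continuous differentiability gives
\[
\nabla f(\vec{x}+t\vec{v}) - \nabla f(\vec{x}) = t\,\nabla^{2} f(\vec{x})\vec{v} + o(t) \quad\text{as } t\to 0.
\]
Dividing by $t$ and taking $t\to 0$ yields $\nabla^{2} f(\vec{x})\vec{v} = \lim_{t\to 0}\tfrac{1}{t}\bigl(\nabla f(\vec{x}+t\vec{v})-\nabla f(\vec{x})\bigr)$. Applying $\norm[2]{\cdot}$ and using the hypothesis that $\nabla f$ is $L$-Lipschitz on $\mathcal{S}$,
\[
\norm[2]{\nabla^{2} f(\vec{x})\vec{v}} = \lim_{t\to 0}\frac{\norm[2]{\nabla f(\vec{x}+t\vec{v})-\nabla f(\vec{x})}}{|t|} \leq \lim_{t\to 0}\frac{L\,|t|\,\norm[2]{\vec{v}}}{|t|} = L.
\]

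Next I would take the supremum over unit vectors $\vec{v}$. Because $\nabla^{2} f(\vec{x})$ is symmetric (Schwarz, cf.\ Remark \ref{rem:spec}), its operator $2$-norm equals $\sup_{\norm[2]{\vec{v}}=1}\norm[2]{\nabla^{2} f(\vec{x})\vec{v}}$, and the bound above gives $\norm[2]{\nabla^{2} f(\vec{x})}\leq L$. Since $\vec{x}\in\mathcal{S}$ was arbitrary, taking the supremum over $\mathcal{S}$ concludes the proof.

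I do not expect a serious obstacle here: the only subtle point is ensuring that the difference-quotient argument is legitimate, i.e.\ that $\vec{x}+t\vec{v}$ lies in $\mathcal{S}$ for small $t$, which is immediate from openness of $\mathcal{S}$, and that the Lipschitz bound on $\nabla f$ may be applied between such pairs of points, which is by hypothesis. Convexity of $\mathcal{S}$ is not even needed for this direction; it was needed for the forward direction in Lemma \ref{lem:lipsc} to integrate along the segment from $\vec{x}$ to $\vec{y}$.
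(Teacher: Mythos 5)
Your proof is correct and follows essentially the same route as the paper's: both extract the bound $\norm[2]{\nabla^{2} f(\vec{x})\vec{v}} \leq L$ from the first-order Taylor expansion of $\nabla f$ at $\vec{x}$ combined with the Lipschitz bound on the difference quotient, then take a supremum over directions; the paper merely phrases the limit via an explicit $\epsilon$ and a small closed ball rather than a directional difference quotient. Your closing observation that convexity of $\mathcal{S}$ is not needed for this direction (only for Lemma \ref{lem:lipsc}) is accurate.
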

\begin{proof} Fix an $\epsilon>0$. By Taylor's theorem since $f$ is twice differentiable with respect to some point $\vec{x}$ it holds that
\begin{align*}
 \norm[2]{\nabla f(\vec{y})- \nabla f(\vec{x})} &\geq \norm[2]{(\nabla^2 f(\vec{x}))(\vec{y} - \vec{x})} - o(\norm[2]{\vec{y}-\vec{x}})
\\&\geq  \norm[2]{(\nabla^2 f(\vec{x}))(\vec{y} - \vec{x})} - \epsilon \norm[2]{\vec{y}-\vec{x}}
\end{align*}
for $\vec{y}$ sufficiently close to $\vec{x}$ (depends on $\epsilon$). Therefore under the Lipschitz assumption we get that there exists a closed neighborhood $U(\epsilon)$ of $\vec{x}$ so that for all $\vec{y} \in U$ we get
\begin{equation}\label{eq:inequality}
   \norm[2]{(\nabla^2 f(\vec{x}))(\vec{x} - \vec{y})} \leq \norm[2]{\nabla f(\vec{x})- \nabla f(\vec{y})} + \epsilon \norm[2]{\vec{y}-\vec{x}} \leq (L+\epsilon) \norm[2]{\vec{x}-\vec{y}}.
\end{equation}
We consider a closed ball $B$ subset of $U$, with center $\vec{x}$ and radius $r$ (in $\ell_{2}$) and set $\vec{z} = \vec{x} - \vec{y}$. It is true that $\norm[2]{\nabla^2 f(\vec{x})} = \sup_{\norm[2]{\vec{z}} = r} \frac{\norm[2]{(\nabla^2 f(\vec{x})) \vec{z}}}{\norm[2]{\vec{z}}}$ by definition of spectral norm, scaled so that the length of the vectors is exactly $r$. Using \ref{eq:inequality} we get that $\norm[2]{\nabla^2 f(\vec{x})} \leq L+\epsilon$. Since $\epsilon$ is arbitrary, we get that $\norm[2]{\nabla^2 f(\vec{x})} \leq L$. We conclude that $\sup_{\vec{x} \in \mathcal{S}}\norm[2]{\nabla^2 f(\vec{x})} \leq L$.
\end{proof}
Lemmas \ref{lem:lipsc} and \ref{lem:invlipsc} show that the smoothness assumptions in Lee \etal paper are equivalent to ours. We use the condition on the spectral norm of the matrix $\nabla^2 f(\vec{x})$ so that we can work with the eigenvalues in our theorems (e.g., in Remark \ref{rem:spec} the spectral norm coincides with spectral radius for $\nabla^2 f(\vec{x})$).
Below we prove that the update rule of gradient descent, i.e., function $g$ is a diffeomorphism under the assumptions of Theorem \ref{thm:main2} (similar approach appeared in \cite{DBLP:journals/corr/LeeSJR16}).
\begin{lemma} \label{lem:diffeo}Under the assumptions of Theorem \ref{thm:main2}, function $g$ is a diffeomorphism in $\mathcal{S}$.
\end{lemma}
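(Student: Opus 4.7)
The plan is to verify the three ingredients of a diffeomorphism onto its image: $g$ is $C^1$, $g$ is injective on $\mathcal{S}$, and the Jacobian $Dg(\vec{x}) = I - \alpha\nabla^2 f(\vec{x})$ is invertible everywhere in $\mathcal{S}$. Smoothness of $g$ is immediate from the hypothesis that $f$ is twice continuously differentiable, so $\nabla f$ is $C^1$ and hence so is $g$.

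For injectivity I would use Lemma \ref{lem:lipsc}, which applies precisely because $\mathcal{S}$ is open and convex and $\nabla^2 f$ is bounded in spectral norm by $L$. Suppose $g(\vec{x}) = g(\vec{y})$ for some $\vec{x},\vec{y} \in \mathcal{S}$. Then $\vec{x} - \vec{y} = \alpha(\nabla f(\vec{x}) - \nabla f(\vec{y}))$, and taking $\ell_2$ norms together with the Lipschitz bound yields
\[
\norm[2]{\vec{x}-\vec{y}} = \alpha \norm[2]{\nabla f(\vec{x})-\nabla f(\vec{y})} \leq \alpha L \norm[2]{\vec{x}-\vec{y}}.
\]
Since $\alpha L < 1$, this forces $\vec{x} = \vec{y}$.

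For invertibility of the Jacobian I would use Remark \ref{rem:spec}: because $\nabla^2 f(\vec{x})$ is symmetric, its eigenvalues $\lambda_i(\vec{x})$ are real and satisfy $|\lambda_i(\vec{x})| \leq \spec{\nabla^2 f(\vec{x})} = \norm[2]{\nabla^2 f(\vec{x})} \leq L$. Consequently the eigenvalues of $Dg(\vec{x}) = I - \alpha\nabla^2 f(\vec{x})$ are $1 - \alpha\lambda_i(\vec{x})$, each of absolute value at least $1 - \alpha L > 0$. Thus $Dg(\vec{x})$ is nonsingular at every $\vec{x} \in \mathcal{S}$.

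Putting these pieces together via the inverse function theorem: at every point of $\mathcal{S}$, $g$ is a local $C^1$ diffeomorphism, so $g(\mathcal{S})$ is open and the locally defined inverses are $C^1$. Global injectivity lets these local inverses be pieced into a single well-defined map $g^{-1} : g(\mathcal{S}) \to \mathcal{S}$ which inherits $C^1$-smoothness from its local pieces. Hence $g$ is a diffeomorphism from $\mathcal{S}$ onto its image $g(\mathcal{S}) \subseteq \mathcal{S}$. I expect no serious obstacle here; the only subtle point is that the Lipschitz conclusion of Lemma \ref{lem:lipsc} genuinely requires the convexity of $\mathcal{S}$ (so that the segment from $\vec{x}$ to $\vec{y}$ stays inside the domain), which is why the hypothesis on $\mathcal{S}$ in Theorem \ref{thm:main2} is stated that way.
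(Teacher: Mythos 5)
Your proposal is correct and follows essentially the same route as the paper: injectivity from the Lipschitz bound of Lemma \ref{lem:lipsc} together with $\alpha L < 1$, invertibility of $I - \alpha \nabla^2 f(\vec{x})$ from the eigenvalue bound $|\lambda| \leq \spec{\nabla^2 f(\vec{x})} \leq L$, and the inverse function theorem combined with global injectivity to obtain a smooth global inverse. The only cosmetic difference is that the paper phrases the injectivity step as a proof by contradiction, whereas you conclude $\vec{x} = \vec{y}$ directly from $(1-\alpha L)\norm[2]{\vec{x}-\vec{y}} \leq 0$.
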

\begin{proof} First we prove that $g$ is a injective. We follow the same argument as in \cite{DBLP:journals/corr/LeeSJR16}. Suppose $g(\vec{y}) = g(\vec{x})$, thus $\vec{y} - \vec{x} = \alpha(\nabla f(\vec{y}) - \nabla f(\vec{x}))$. We assume that $\vec{x} \neq \vec{y}$ and we will reach contradiction. From Lemma \ref{lem:lipsc} we get $\norm[2]{\nabla f(\vec{y})- \nabla f(\vec{x})} \leq L \norm[2]{\vec{y}-\vec{x}}$ and hence $\norm[2]{\vec{x} - \vec{y}} \leq \alpha L \norm[2]{\vec{y}-\vec{x}}< \norm[2]{\vec{y}-\vec{x}}$ since $\alpha L <1$ (contradiction).

We continue by showing that $g$ is a local diffeomorphism. Observe that the Jacobian of $g$ is $I - \alpha \nabla^2 f(\vec{x})$. It suffices to show that $\alpha \nabla^2 f(\vec{x})$ has no eigenvalue which is 1, because this implies matrix $I - \alpha \nabla^2 f(\vec{x})$ is invertible. As long as $I - \alpha \nabla^2 f(\vec{x})$ is invertible, from Inverse Function Theorem (see \cite{spivak}) follows that $g$ is a local diffeomorphism. Finally, since $g$ is injective, the inverse $g^{-1}$ is well defined and since $g$ is a local diffeomorphism in $\mathcal{S}$, it follows that $g^{-1}$ is smooth in $\mathcal{S}$. Therefore $g$ is a diffeomorphism.

Let $\lambda$ be an eigenvalue of $\nabla^2 f(\vec{x})$. Then $|\lambda| \leq \spec{\nabla^2 f(\vec{x})} = \norm[2]{\nabla^2 f(\vec{x})} \leq L$ where the equality comes from Remark \ref{rem:spec} and first and last inequalities are satisfied by assumption. Therefore $\alpha\nabla^2 f(\vec{x})$ has as eigenvalue $\alpha \lambda$ and $|\alpha \lambda| \leq \alpha L < 1$. Thus all eigenvalues of $\alpha \nabla^2 f(\vec{x})$ are less than 1 in absolute value and the proof is complete.
\end{proof}

We will use the center-stable manifold theorem since $g(\vec{x}) = \vec{x}-\alpha\nabla f(\vec{x})$ is a diffeomorphism, where $\sup_{\vec{x} \in \mathcal{S}}\norm[2]{\nabla^2 f(\vec{x})} \leq L$ and $\alpha < 1/L$.  A modification of this proof for replicator dynamics (not gradient descent) appeared in \cite{PP16} and \cite{ITCSMPP15}.

\begin{theorem}[Center and Stable Manifolds, p. 65 of \cite{shub}]\label{thm:manifold}
Let $\vec{p}$ be a fixed point for the $C^r$ local diffeomorphism $h: U \to \mathbb{R}^n$ where $U \subset \mathbb{R}^n$ is an open neighborhood of $\vec{p}$ in $\mathbb{R}^n$ and $r \geq 1$. Let $E^s \oplus E^c \oplus E^u$ be the invariant splitting
of $\mathbb{R}^n$ into generalized eigenspaces of $Dh(\vec{p})$\footnote{Jacobian of $h$ evaluated at $\vec{p}$.} corresponding to
eigenvalues of absolute value less than one, equal to one, and greater than one. To the $Dh(\vec{p})$ invariant subspace $E^s\oplus
E^c$ there is an associated local $h$ invariant $C^r$ embedded disc $W^{sc}_{loc}$ of dimension $dim(E^s \oplus E^c)$, and ball $B$ around $\vec{p}$ such that:
\begin{equation} h(W^{sc}_{loc}) \cap B \subset W^{sc}_{loc}.\textrm{  If } h^n(\vec{x}) \in B \textrm{ for all }n \geq 0,
\textrm{ then }\vec{x} \in W^{sc}_{loc}.
\end{equation}
\end{theorem}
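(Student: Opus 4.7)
The plan is to adapt the classical Lyapunov-Perron (variation-of-constants) construction, characterizing $W^{sc}_{loc}$ as the set of initial conditions whose forward orbits grow no faster than a chosen rate $\lambda$ slightly above $1$ but strictly smaller than the modulus of every eigenvalue of $Dh(\vec{p})|_{E^u}$. First I would translate so that $\vec{p}=\vec{0}$, write $h(\vec{x}) = A\vec{x} + F(\vec{x})$ with $A = Dh(\vec{0})$, $F \in C^r$, $F(\vec{0}) = \vec{0}$, $DF(\vec{0}) = 0$, and multiply $F$ by a smooth bump function supported in a small neighborhood of the origin, producing a globally defined $\tilde F$ that equals $F$ on a ball $B$ and whose global Lipschitz constant $\epsilon$ can be made as small as needed. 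The modified map $\tilde h = A + \tilde F$ agrees with $h$ on $B$, so any globally invariant manifold for $\tilde h$ cut off to $B$ yields the desired local object for $h$.

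Next I would pick an adapted (Lyapunov) inner product in which $A$ respects the splitting $E^s \oplus E^c \oplus E^u$ and satisfies $\norm[2]{A\vec{v}} \leq \mu \norm[2]{\vec{v}}$ for $\vec{v} \in E^s \oplus E^c$ and $\norm[2]{A^{-1}\vec{w}} \leq \nu^{-1}\norm[2]{\vec{w}}$ for $\vec{w} \in E^u$, with constants $\mu \leq 1 < \lambda < \nu$. Write $E^{sc} := E^s \oplus E^c$ and decompose each vector as $(\cdot^{sc},\cdot^u)$. On the Banach space
\[
\mathcal{B} = \inb{\vec{x} = (\vec{x}_n)_{n \geq 0} \subset \mathbb{R}^n \st \norm[\lambda]{\vec{x}} := \sup_{n \geq 0} \lambda^{-n}\norm[2]{\vec{x}_n} < \infty}
\]
I would define, for each small $\xi \in E^{sc}$, the Lyapunov-Perron operator
\[
(T_\xi \vec{x})_n^{sc} = A_{sc}^n \xi + \sum_{k=0}^{n-1} A_{sc}^{n-1-k} \tilde F_{sc}(\vec{x}_k), \qquad (T_\xi \vec{x})_n^u = - \sum_{k=n}^{\infty} A_u^{-(k-n+1)} \tilde F_u(\vec{x}_k).
\]
The equation $T_\xi \vec{x} = \vec{x}$ is precisely the statement that $\vec{x}$ is an orbit of $\tilde h$ with $\vec{x}_0^{sc}=\xi$ that remains in $\mathcal{B}$: the backward summation for the $u$-block is the only choice consistent with $\lambda$-boundedness because $A_u$ is expansive in this norm. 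With $\epsilon$ chosen small relative to the gaps $\lambda/\mu$ and $\nu/\lambda$, standard geometric-series estimates show $T_\xi$ is a uniform contraction on $\mathcal{B}$, so Banach's fixed point theorem yields a unique sequence $\vec{x}(\xi) \in \mathcal{B}$ depending Lipschitz-continuously on $\xi$.

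Setting $\phi(\xi) := \vec{x}_0(\xi)^u$ and defining $W^{sc}_{loc}$ as the graph of $\phi$ restricted to a small ball around $\vec{0}$ in $E^{sc}$, the two conclusions of the theorem reduce to bookkeeping around this fixed-point characterization. Forward invariance $h(W^{sc}_{loc}) \cap B \subset W^{sc}_{loc}$ follows because the shifted sequence $(\vec{x}_{n+1}(\xi))_{n \geq 0}$ is itself a fixed point of $T_{\xi'}$ with $\xi' := \vec{x}_1(\xi)^{sc}$, so $h(\xi, \phi(\xi)) = (\xi', \phi(\xi'))$ whenever it still lies in $B$; the trapping statement follows because any orbit of $h$ that remains in $B$ for all $n\geq 0$ yields, a fortiori, an element of $\mathcal{B}$, which by uniqueness must coincide with $\vec{x}(\xi)$ for its own initial $sc$-component, placing it on the graph. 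The one genuinely hard step is upgrading Lipschitz dependence to $C^r$ dependence on $\xi$: Banach's theorem by itself only delivers continuity, so I would invoke the fiber contraction theorem of Hirsch-Pugh-Shub, differentiating the fixed-point equation formally up to order $r$ and setting up successive contractions on bundles of $k$-jet sequences whose base contractions have already been constructed. Verifying that each jet-level contraction inherits continuous base dependence, rather than the algebraic manipulations themselves, is the principal technical obstacle; the rest is routine contraction-mapping bookkeeping under the chosen Lyapunov norm.
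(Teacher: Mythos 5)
The paper does not prove this statement at all: it is imported verbatim as a classical result, cited to p.~65 of Shub's \emph{Global Stability of Dynamical Systems}, and used downstream as a black box (the only facts the authors extract from it are the existence of the ball $B$, the bound $\dim W^{sc}_{loc} \leq N-1$ at a strict saddle, and the trapping property). So there is no in-paper proof to compare against; what you have written is a sketch of a proof of the theorem itself. As such, your outline is a legitimate and essentially standard route --- the Lyapunov--Perron (variation-of-constants) construction --- whereas the Hirsch--Pugh--Shub development in the cited source proceeds by the Hadamard graph-transform method, iterating a contraction on a space of Lipschitz sections of the normal bundle and then invoking the $C^r$ section theorem. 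The two methods are interchangeable here; yours makes the trapping statement (``if $h^n(\vec{x}) \in B$ for all $n \geq 0$ then $\vec{x} \in W^{sc}_{loc}$'') fall out cleanly from uniqueness of the fixed point of $T_\xi$ in the weighted sequence space, which is arguably the property this paper most depends on. Two small cautions: (i) your claim that the adapted norm achieves $\mu \leq 1$ on $E^s \oplus E^c$ fails when $Dh(\vec{p})|_{E^c}$ has a nontrivial Jordan block on the unit circle --- you only get $\mu \leq 1+\delta$ for arbitrary $\delta>0$, which is still enough since all you need is $\mu < \lambda < \nu$; (ii) you correctly flag that $C^r$ regularity of $\phi$ does not come from Banach's theorem alone and requires the fiber contraction theorem, and for the center-\emph{stable} manifold the requisite gap condition ($\mu^r < \nu$, with $\mu$ arbitrarily close to $1$ and $\nu>1$) is automatically satisfiable for every finite $r$, so that step goes through. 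Filling in the jet-level contractions is where the real labor lies, but the architecture you describe is sound.
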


From this point on
our approach deviates significantly from that of \cite{DBLP:journals/corr/LeeSJR16}  and new, orthogonal ideas and tools need to be introduced. 
\medskip

Let $\vec{r}$ be a critical point of function $f(\vec{x})$ and $B_{\vec{r}}$ be the (open) ball that is derived from Theorem \ref{thm:manifold}. We consider the union of these balls $$A = \cup _{\vec{r}}B_{\vec{r}}.$$ The following property for $\mathbb{R}^N$ holds:

\begin{theorem} [Lindel\H{o}f's lemma \cite{kelley}] For every open cover there is a countable subcover.
\end{theorem}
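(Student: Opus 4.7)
The plan is to reduce to second countability: $\mathbb{R}^N$ admits a countable base, so any open cover of any subset must have a countable subcover. First I would fix such a base $\mathcal{B}$, e.g., the collection of open Euclidean balls with centers in $\mathbb{Q}^N$ and positive rational radii; this set is countable, and every open subset of $\mathbb{R}^N$ is a union of elements of $\mathcal{B}$, simply because for every point $x$ in an open set $U$ there is a rational-centered rational-radius ball inside $U$ containing $x$.

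Given an open cover $\{U_\alpha\}_{\alpha \in I}$, I would form the sub-collection $\mathcal{B}' = \{B \in \mathcal{B} : B \subseteq U_\alpha \text{ for some } \alpha \in I\}$, which is at most countable since $\mathcal{B}' \subseteq \mathcal{B}$. Using the axiom of choice I would pick, for each $B \in \mathcal{B}'$, a single index $\alpha(B) \in I$ realizing the inclusion $B \subseteq U_{\alpha(B)}$, and propose $\{U_{\alpha(B)} : B \in \mathcal{B}'\}$ as the countable subcover.

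The only nontrivial step is checking that this countable family still covers everything covered by the original one. Given a point $x$ with $x \in U_{\alpha_0}$ for some $\alpha_0 \in I$, openness of $U_{\alpha_0}$ and the base property give some $B \in \mathcal{B}$ with $x \in B \subseteq U_{\alpha_0}$, so $B \in \mathcal{B}'$ and consequently $x \in U_{\alpha(B)}$. The main conceptual point, and the reason for phrasing things via $\mathcal{B}'$ rather than choosing a basic set pointwise, is that indexing the selection by members of the countable base $\mathcal{B}$ rather than by the (uncountably many) points of $\mathbb{R}^N$ is exactly what guarantees countability; once this shift in viewpoint is made, the argument reduces to routine bookkeeping.
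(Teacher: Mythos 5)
Your argument is correct: it is the standard proof that every second-countable space (here $\mathbb{R}^N$ with the base of rational balls) is hereditarily Lindel\"{o}f, and the key step of indexing the choice by the countable base rather than by points is exactly right. The paper itself gives no proof, citing Kelley's textbook instead, so there is nothing to compare against beyond noting that your proof is the standard one and covers the way the lemma is actually used in the paper (covers of arbitrary subsets of $\mathbb{R}^N$, such as the union of balls $A$).
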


Therefore due to Lindel\H{o}f's lemma, we can find a countable subcover for $A$, i.e., there exists fixed-points $\vec{r}_1,\vec{r}_2,\dots$ such that $A = \cup _{m=1}^{\infty}B_{\vec{r}_{m}}$.
If  gradient descent converges to a strict saddle point, starting from a point $\vec{v} \in \mathcal{S}$, there must exist a $t_{0}$ and $m$ so that $g^{t}(\vec{v}) \in B_{\vec{r}_{m}}$ for all $t \geq t_{0}$. From Theorem \ref{thm:manifold} we get that $g^t(\vec{v}) \in W_{loc}^{sc}(\vec{r}_m) \cap \mathcal{S}$ where we used the fact that $g(\mathcal{S}) \subseteq \mathcal{S}$ (from assumption forward invariant), namely the trajectory remains in $\mathcal{S}$ for all times \footnote{$W_{loc}^{sc}(\vec{r}_m)$ denotes the center stable manifold of fixed point $\vec{r}_m$}. By setting $D_{1}(\vec{r}_m) = g^{-1}(W_{loc}^{sc}(\vec{r}_m) \cap \mathcal{S})$ and $D_{i+1}(\vec{r}_m) = g^{-1}(D_{i}(\vec{r}_m) \cap \mathcal{S})$ we get that $\vec{v} \in D_{t}(\vec{r}_m)$ for all $t \geq t_0$.
Hence the set of initial points in $\mathcal{S}$ so that gradient descent converges to a strict saddle point is a subset of
\begin{equation}
P =  \cup_{m=1}^{\infty} \cup_{t=0}^{\infty} D_{t}(\vec{r}_m)).
\end{equation}

Since $\vec{r}_m$ is strict saddle point, the Jacobian $I - \alpha \nabla^2 f(\vec{x})$ has an eigenvalue greater than 1, namely the dimension of the unstable eigenspace satisfies $dim(E^u)\ge 1$, and therefore the dimension of $W_{loc}^{sc}(\vec{r}_m)$ is at most $N-1$. Thus,
the set $W_{loc}^{sc}(\vec{r}_m) \cap \mathcal{S}$ has Lebesgue measure zero in $\mathbb{R}^N$. 
 Finally since $g$ is a diffeomorphism (from Lemma \ref{lem:diffeo}), $g^{-1}$ is continuously differentiable and thus it is locally Lipschitz (see \cite{perko} p.71). Therefore using Lemma \ref{lips} below, $g^{-1}$ preserves the null-sets and hence (by induction) $D_{i}(\vec{r}_m)$ has measure zero for all $i$. Thereby we get that $P$ is a countable union of measure zero sets, i.e., is measure zero as well and the claim of Theorem \ref{thm:main2} follows. \qed

\noindent
 The following lemma is standard, but we provide a proof for completeness.

\begin{lemma}\label{lips} Let $h: \mathcal{S} \to \mathbb{R}^m$ be a locally Lipschitz function with $\mathcal{S} \subseteq \mathbb{R}^m$ then $h$ is null-set preserving, i.e., for $E \subset \mathcal{S}$ if $E$ has measure zero then $h(E)$ has also measure zero.
\end{lemma}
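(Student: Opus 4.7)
The plan is to give the standard covering argument: reduce to the case where $h$ is globally Lipschitz on small pieces (via local Lipschitzness plus Lindel\"of), then control the outer measure of the image on each piece by the outer measure of the domain.

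First I would unpack local Lipschitzness: for each $\vec{x}\in\mathcal{S}$ there is an open ball $U_{\vec{x}}\subseteq\mathcal{S}$ and a constant $L_{\vec{x}}>0$ such that $\norm[2]{h(\vec{u})-h(\vec{v})}\le L_{\vec{x}}\norm[2]{\vec{u}-\vec{v}}$ for all $\vec{u},\vec{v}\in U_{\vec{x}}$. The collection $\{U_{\vec{x}}\}_{\vec{x}\in\mathcal{S}}$ is an open cover of $\mathcal{S}$, so by Lindel\"of's lemma (already invoked in the excerpt) there is a countable subcover $\{U_n\}_{n\ge 1}$ with associated Lipschitz constants $L_n$. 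Writing $E=\bigcup_{n\ge 1}(E\cap U_n)$ we get $h(E)=\bigcup_{n\ge 1}h(E\cap U_n)$, so since a countable union of null sets is null, it suffices to show that $h(E\cap U_n)$ has Lebesgue measure zero for every fixed $n$.

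Next I would carry out the covering estimate on a single piece. Fix $n$ and $\epsilon>0$. Since $E\cap U_n$ has outer measure zero, I can cover it by a countable collection of open balls $\{B(\vec{y}_i,r_i)\}_{i\ge 1}$, each contained in $U_n$, with $\sum_{i\ge 1} r_i^{m}<\epsilon$ (any ball not contained in $U_n$ can be intersected with $U_n$ and then re-covered by balls sitting inside $U_n$, so this is no restriction; alternatively, use the fact that outer measure can be approximated by covers of arbitrarily small diameter). Since $h$ is $L_n$-Lipschitz on $U_n$, the image $h(B(\vec{y}_i,r_i))$ has diameter at most $2L_n r_i$ and is therefore contained in a ball of radius $L_n r_i$ in $\mathbb{R}^m$. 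Summing the volumes, there is a dimensional constant $c_m$ with
\begin{equation*}
\mu^{*}\bigl(h(E\cap U_n)\bigr)\le \sum_{i\ge 1} c_m (L_n r_i)^{m} = c_m L_n^{m}\sum_{i\ge 1} r_i^{m}< c_m L_n^{m}\,\epsilon,
\end{equation*}
where $\mu^{*}$ denotes $m$-dimensional Lebesgue outer measure. Letting $\epsilon\to 0$ gives $\mu^{*}(h(E\cap U_n))=0$, so $h(E\cap U_n)$ is a null set, and a countable union over $n$ finishes the proof.

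There is no real obstacle here; the only mildly delicate point is the reduction to balls contained in $U_n$, which I would handle by first intersecting $E$ with an exhaustion of $U_n$ by compact sets so that all covering balls of sufficiently small radius automatically lie inside $U_n$. Everything else is the classical observation that an $L$-Lipschitz map scales $m$-dimensional volume by at most $L^{m}$, applied piecewise.
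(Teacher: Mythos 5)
Your proposal is correct and follows essentially the same route as the paper's own proof: localize via Lindel\"of to countably many balls on which $h$ is Lipschitz with some constant, cover the null set on each piece by small balls, and use the fact that an $L$-Lipschitz map sends a ball of radius $r$ into a ball of radius $Lr$, inflating volume by at most $L^{m}$. Your added care about keeping the covering balls inside each $U_n$ is a minor tightening of a point the paper glosses over, but the argument is the same.
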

\begin{proof} Let $B_{\gamma}$ be an open ball such that $\norm[]{h(\vec{y}) - h(\vec{x})} \leq K_{\gamma} \norm[]{\vec{y}-\vec{x}}$
for all $\vec{x},\vec{y} \in B_{\gamma}$. We consider the union $\cup_{\gamma}B_{\gamma}$ which cover $\mathbb{R}^m$ by the
assumption that $h$ is locally Lipschitz. By Lindel\H{o}f's lemma we have a countable subcover, i.e., $\cup_{i=1}^{\infty}B_{i}$. Let
$E_{i} = E \cap B_{i}$. We will prove that $h(E_{i})$ has measure zero. Fix an $\epsilon >0$. Since $E_{i} \subset E$, we have that
$E_{i}$ has measure zero, hence we can find a countable cover of open balls $C_{1},C_{2},... $ for $E_{i}$, namely $E_{i} \subset
\cup_{j=1}^{\infty}C_{j}$ so that $C_{j} \subset B_{i}$ for all $j$ and also $\sum_{j=1}^{\infty} \mu(C_{j}) <
\frac{\epsilon}{K_{i}^m}$. Since $E_{i} \subset \cup_{j=1}^{\infty}C_{j}$ we get that $h(E_{i}) \subset \cup_{j=1}^{\infty}h(C_{j})$,
namely $h(C_{1}),h(C_{2}),...$ cover $h(E_{i})$ and also $h(C_{j}) \subset h(B_{i})$ for all $j$. Assuming that ball $C_{j} \equiv
B(\vec{x},r)$ (center $\vec{x}$ and radius $r$) then it is clear that $h(C_{j}) \subset B(h(\vec{x}),K_{i} r)$ ($h$ maps the
center $\vec{x}$ to $h(\vec{x})$ and the radius $r$ to $K_{i}r$ because of Lipschitz assumption). But $\mu(B(h(\vec{x}),K_{i}
r)) = K_{i}^m \mu(B(\vec{x}, r)) = K_{i}^m \mu(C_{j})$, therefore $\mu(h(C_{j})) \leq K_{i}^m \mu(C_{j})$ and so we conclude that
$$\mu(h(E_{i})) \leq \sum_{j=1}^{\infty}\mu(h(C_{j})) \leq K_{i}^m \sum_{j=1}^{\infty}\mu(C_{j}) < \epsilon$$ Since $\epsilon$ was
arbitrary, it follows that $\mu(h(E_{i})) =0$. To finish the proof, observe that $h(E) = \cup_{i=1}^{\infty} h(E_{i})$ therefore
$\mu(h(E)) \leq \sum_{i=1}^{\infty} \mu(h(E_{i})) =0$.  \end{proof}

\noindent
A straightforward application of Theorem \ref{thm:main2} is the following:
\begin{corollary}\label{cor:one}
Assume that the conditions of Theorem \ref{thm:main2} are satisfied and all saddle points of $f$ are strict. Additionally, let $\nu$ be a prior measure with support $\mathcal{S}$ which is absolutely continuous with respect to Lebesgue measure, and assume $\lim_{k \to \infty} g^k(x)$ exists\footnote{$g^k$ denotes the composition of $g$ with itself $k$ times.} for all $\vec{x}$ in $\mathcal{S}$. Then $$\mathbb{P}_{\nu}[\lim_k g^k(\vec{x}) = \vec{x}^*]=1,$$ where $\vec{x}^*$ is a local minimum.
\end{corollary}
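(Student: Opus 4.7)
The plan is to combine the measure-theoretic content of Theorem \ref{thm:main2} with a classification of the possible limit points of the iterates $g^{k}$. First I would observe that since $\lim_{k \to \infty} g^{k}(\vec{x})$ exists for every $\vec{x} \in \mathcal{S}$ and $g$ is continuous (indeed, a diffeomorphism by Lemma \ref{lem:diffeo}), the limit $\vec{x}_{\infty} := \lim_{k} g^{k}(\vec{x})$ must be a fixed point of $g$, and fixed points of $g$ are exactly the critical points of $f$. So almost surely (in fact, for every $\vec{x}$) the iterates terminate at some critical point.

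Next I would classify the critical set $C$. Under Definition \ref{def:main} any critical point that is not a local minimum is a saddle point, and by the corollary's hypothesis every such saddle is a strict saddle, i.e.\ $\lambda_{\min}(\nabla^{2} f(\vec{x}^{*})) < 0$. Thus $C = \mathcal{C}^{*} \cup \mathcal{C}^{\mathrm{strict}}$, where $\mathcal{C}^{*}$ is the set of local minima and $\mathcal{C}^{\mathrm{strict}}$ is the set of strict saddles. Consequently, for every $\vec{x} \in \mathcal{S}$, the limit $\vec{x}_{\infty}$ lies in exactly one of these two sets, giving the partition
\[
\mathcal{S} \;=\; \bigl\{\vec{x} \in \mathcal{S} : \vec{x}_{\infty} \in \mathcal{C}^{*}\bigr\} \;\sqcup\; \bigl\{\vec{x} \in \mathcal{S} : \vec{x}_{\infty} \in \mathcal{C}^{\mathrm{strict}}\bigr\} \;=:\; \mathcal{S}^{*} \sqcup \mathcal{S}^{\mathrm{bad}}.
\]

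I would then invoke Theorem \ref{thm:main2}, whose hypotheses are assumed to hold: it yields $\lambda(\mathcal{S}^{\mathrm{bad}}) = 0$, where $\lambda$ denotes Lebesgue measure on $\mathbb{R}^{N}$. Since $\nu$ is absolutely continuous with respect to $\lambda$, any Lebesgue-null set is also $\nu$-null; hence $\nu(\mathcal{S}^{\mathrm{bad}}) = 0$. Because $\nu$ has support $\mathcal{S}$, we get $\nu(\mathcal{S}^{*}) = \nu(\mathcal{S}) - \nu(\mathcal{S}^{\mathrm{bad}}) = 1$, which is precisely the claimed equality $\mathbb{P}_{\nu}[\lim_{k} g^{k}(\vec{x}) = \vec{x}^{*}] = 1$.

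There is no real obstacle here: the corollary is essentially a packaging of Theorem \ref{thm:main2} together with absolute continuity, once one verifies that every critical point reached in the limit is either a local minimum or a strict saddle. The only subtlety worth stating explicitly is measurability of the set $\mathcal{S}^{\mathrm{bad}}$, which follows from writing it as the countable union (over $m$ and $t_{0}$, as in the proof of Theorem \ref{thm:main2}) of preimages of center-stable manifolds under iterates of the continuous map $g$; these are Borel, hence Lebesgue measurable, so the absolute-continuity step is justified.
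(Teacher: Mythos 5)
Your proposal is correct and follows essentially the same route as the paper's own (much terser) proof: limits of the iterates are critical points, non-minimum critical points are saddles and hence strict saddles by hypothesis, Theorem \ref{thm:main2} makes that set Lebesgue-null, and absolute continuity of $\nu$ transfers this to a probability-one statement. Your added remarks on the classification of critical points and on measurability of the bad set are reasonable elaborations of details the paper leaves implicit, not a different argument.
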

\begin{proof}
Since the set of initial conditions whose limit point is a (strict) saddle point is a measure zero set and we have assumed $\lim_{k \to \infty} g^k(x)$ exists for all initial conditions in $\mathcal{S}$ then the probability of converging to a local minimizer is $1$.
\end{proof}
\begin{remark}
Arguing that $\lim_k g^k(\vec{x})$ exists follows from standard arguments in several settings of interest (e.g for analytic functions $f$ that satisfy (Lojasiewicz Gradient Inequality)), see paper \cite{DBLP:journals/corr/LeeSJR16} and references therein.
\end{remark}
The importance of Theorem \ref{thm:main2} will become clear in the examples of Section \ref{sec:examples}. Specifically, in the example of Section \ref{eg:positive}, the function is not globally Lipschitz (we use the example that appears in \cite{DBLP:journals/corr/LeeSJR16}), nevertheless Theorem \ref{thm:main2} applies and thus we have convergence to local minimizers with probability 1. In the example of Section \ref{eg:continuums} we see that simple functions may have non-isolated critical points.
\subsection{Proof of Theorem \ref{thm:main3}}

\begin{proof}
We proceed by contradiction. Consider any local minimum $\vec{x}^*$, and by assumption we get that $\spec{\nabla^2 f(\vec{x}^*)}> \gamma$. Let $\alpha \geq \frac{2}{\gamma}$. Therefore the Jacobian $I - \alpha\nabla^2 f(\vec{x}^*)$ of $g$ at $\vec{x}^*$ has spectral radius greater than 1 since $\spec{I - \alpha\nabla^2 f(\vec{x}^*)} \geq \spec{\alpha\nabla^2 f(\vec{x}^*)}-1 > \alpha \gamma -1 \geq 1$. This implies that the fixed point $\vec{x}^*$ of $g$ is (Lyapunov) unstable. Since this is true for every local minimum, it cannot be true that gradient descent converges with probability 1 to local minima. 
\end{proof}


\section{Examples}\label{sec:examples}

\subsection{Example for non-isolated critical points}\label{eg:continuums}

Consider the simple example of the cost function $f:\R^3\rightarrow \R$ with $f(x,y,z)= 2xy+2xz -2x -y -z$. Its gradient is $\nabla(f)=(2y+2z-2,2x-1,2x-1)$. Naturally, its saddle points correspond exactly to the line $(1/2,w,1-w)$ for $w\in \R$ and by computing their (common) eigenvalues we establish that they are all strict saddles (their minimum eigenvalue is $-2\sqrt{2}$). As we expect from our analysis effectively no trajectories converge to them (instead the value of practically all trajectories goes to $-\infty$). We plot in red some sample trajectories for small enough step sizes, starting in the local neighborhood of the equilibrium set.

\begin{figure}[!htb]
\centering
\includegraphics[width=0.4\linewidth]{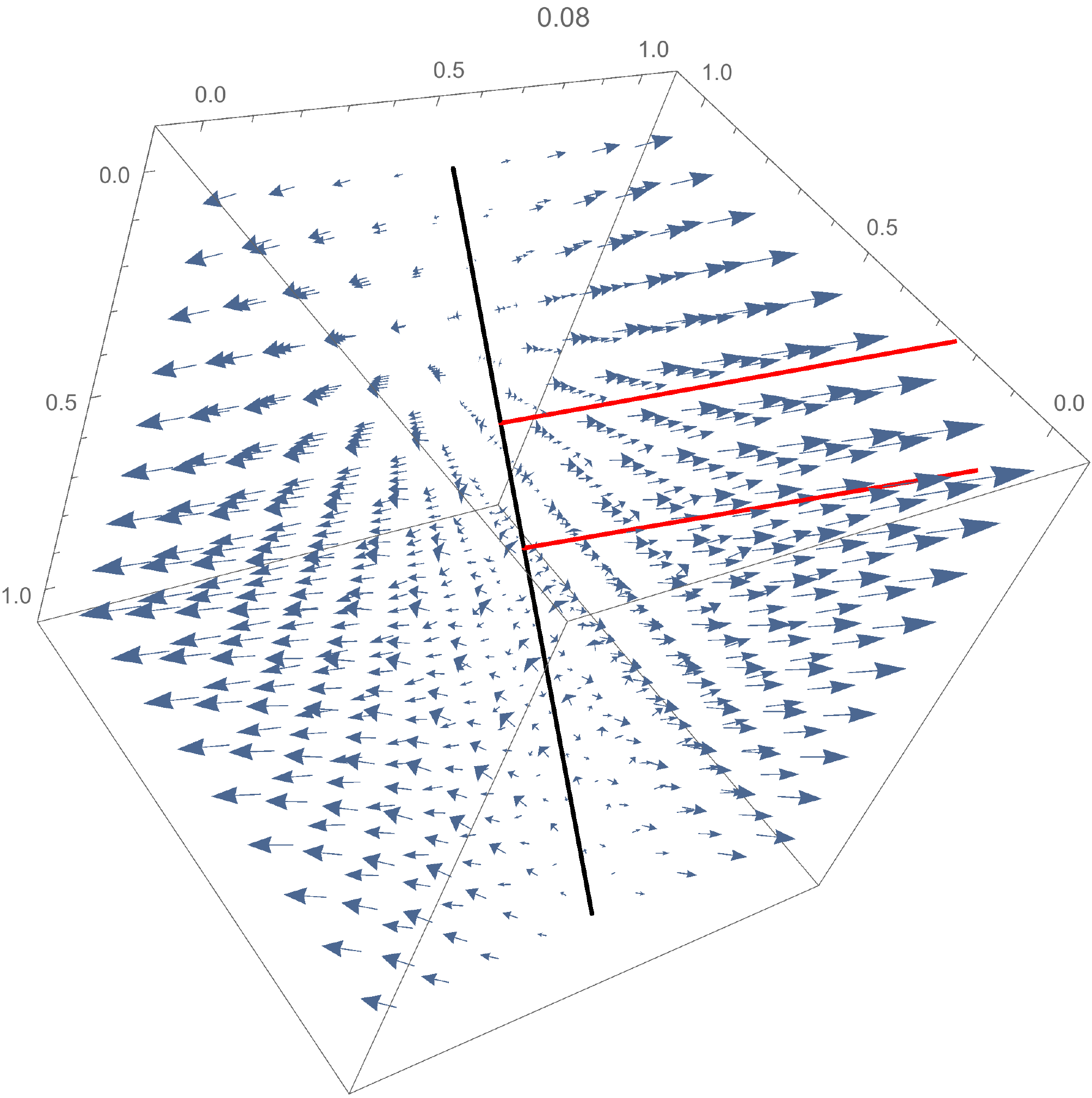}
\caption{Example that satisfies the assumptions of Theorem \ref{thm:main1}. The black line represent critical points of $f$, all of which are strict. The red lines correspond to diverging trajectories of gradient descent with small step size.}
\end{figure}

\subsection{Example for forward invariant set}\label{eg:positive} We use the same function as in Lee \etal $f(x,y) = \frac{x^2}{2} + \frac{y^4}{4} - \frac{y^2}{2}$. As argued in previous sections, $f$ is not globally Lipschitz so the main result in \cite{DBLP:journals/corr/LeeSJR16} cannot be applied here. We will use our Theorem \ref{thm:main2} which talks about forward invariant domains.

The critical points of $f$ are $(0,0), (0,1), (0,-1)$. $(0,0)$ is a strict saddle point and the other two are local minima. Observe that the Hessian $\nabla^2 f(x,y)$ is \[J = \left( \begin{array}{cc}
1 & 0 \\
0 & 3y^2-1\end{array} \right).\] For $\mathcal{S} = (-1,1) \times (-2,2)$, so we get that $\sup_{(x,y) \in \mathcal{S}} \norm[2]{\nabla^2 f(x,y)} \leq 11$ (for $y=2$ gets the maximum value). We choose $\alpha = \frac{1}{12} < \frac{1}{11}$, and we have $g(x,y) =  ((1-\alpha)x, (1+\alpha)y-\alpha y^3) = (\frac{11x}{12}, \frac{13y}{12} - \frac{y^3}{12})$. It is not difficult to see that $g(\mathcal{S}) \subseteq \mathcal{S}$ (easy calculations). The assumptions of Theorem \ref{thm:main2} are satisfied, hence it is true that the set of initial conditions in $\mathcal{S}$ so that gradient descent converges to $(0,0)$ has measure zero. Moreover, by Corollary \ref{cor:one} it holds that if the initial condition is taken (say) uniformly at random in $\mathcal{S}$, then gradient descent converges to $(0,1),(0,-1)$ with probability 1. The figure below makes the claim clear, i.e. the set of initial conditions so that gradient descent converges to $(0,0)$ lie on the axis $y=0$, which is of measure zero in $\mathbb{R}^2$. For all other starting points, gradient descent converges to local minima. Finally, from the figure one can see that $\mathcal{S}$ is forward invariant.
\begin{figure}[!htb]
\centering
\includegraphics[width=0.4\linewidth]{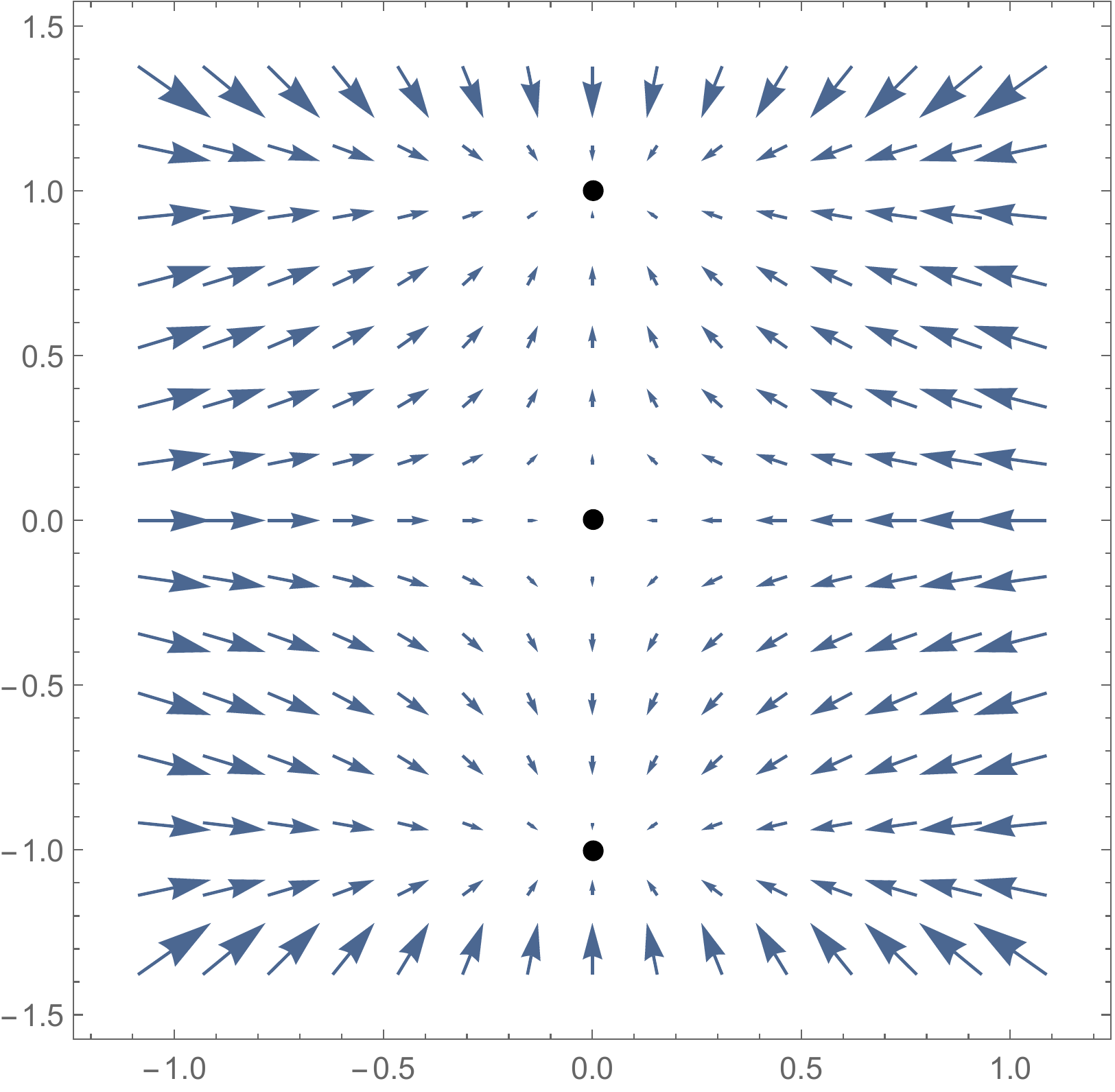}
\caption{Example that satisfies the assumptions of Theorem \ref{thm:main2}. The three black dots represent the critical points. Function $f$ is not Lipschitz.}
\end{figure}

\subsection{Example for step-size}\label{eg:stepsize}
We use the same function as in the previous example. Observe that for $(0,0), (0,1),(0,-1)$ we have that the spectral radius of $\nabla^2 f$ is $1,2,2$ respectively (so the minimum of all is 1). We choose $\alpha \geq 2$ and we get that $g(x,y) = (-x,3y - 2y^3)$. It is not hard to see that
gradient descent does not converge (in the first coordinate function $g$ cycles between $x$ and $-x$).
\section{Conclusion}\label{sec:conclusion}

Our work argues that saddle points are indeed of little concern for the gradient descent method in practice under rather weak assumptions for $f$ which allow for non-isolated critical points. In some sense, this is the strongest positive result possible without making explicit assumptions on the structure of the cost function $f$ nor using beneficial random noise/well chosen initial conditions. Naturally, all these directions are of key interest and are the object of recent work (see section \ref{sec:related}). Keeping up with this simplest, deterministic implementation of gradient descent a natural hypothesis is that (in settings of practical interest) it converges not only to local minimizers but moreover the size of the region of attraction of each local minimizer is in a sense directly proportional to its quality. 

Recently, in \cite{PP16} there has been some progress in proving such statements in non-convex gradient-like systems that arise from learning in games. In such settings, (stable) fixed points correspond to Nash equilibria, but instead of having the typical system performance being dominated by the worst case Nash equilibria (as Price of Anarchy suggests) the regions of attractions of such bad (social) states prove to be minimal and the system works near optimally on average (given uniformly random initial conditions). Extending such statements to actual gradient-dynamics as well as comparing the average case performance of different heuristics even in restricted settings is a fascinating question that could shed more light into the in-many-cases surprising efficiency of the gradient descent method.

\section*{Acknowledgements}\label{sec:ack}

We are grateful to Jason D. Lee, Max Simchowitz, Michael I. Jordan and Benjamin Recht 
 for their support and helpful discussions and suggestions as well as for the elucidating blog article at \url{http://www.offconvex.org} on their elegant work, which inspired our investigation. We are also thankful to Nisheeth Vishnoi, on whose blog the article appeared, for pointing it out to us.

\bibliographystyle{plain}
\bibliography{sigproc3}

\end{document}